\newtheorem{theorem}{Theorem}[section]
\newtheorem{lemma}{Lemma}[section]
\numberwithin{equation}{section}
\begin{document}
	\begin{center}{\bf \large
			{ADDITIONAL CONGRUENCES FOR GENERALIZED COLOR PARTITIONS OF HIRSCHHORN AND SELLERS \\[0.15cm] }  }\vspace{0.2cm}
	\end{center}

	\begin{center}
		\footnotemark[1]
		\bf Anjelin Mariya Johnson, 
	\footnotemark[2]
	\bf James A. Sellers, 
		\footnotemark[3]
		\bf S. N. Fathima  

	\end{center}
	\vspace{1 cm}
	\begin{center}
		\begin{minipage}{0.85\textwidth}  
			
	\noindent{\bf Abstract:} 
	Let $a_k(n)$ denote the number of partitions of $n$ wherein even parts come in only one color, while the odd parts may be ``colored" with one of $k$ colors, for fixed $k$. In this note, we find some congruences for $a_k(n)$ in the spirit of Ramanujan's congruences. We prove a number of results for $a_k(n)$ modulo powers of $2$ for infinitely many values of $k$. Our approach is truly elementary, relying on generating function manipulations, theta functions and $q$-dissection techniques. We then close by demonstrating an infinite family of congruences modulo 11 which is proven using a result of Ahlgren.
 \\
	\vspace{.125cm}
	
	\noindent {\bf \small Keywords} : Congruences, Partitions, Generating Function.\\
	\vspace{.05cm}
	
	\noindent {\bf \small Mathematical Subject Classification (2020)} : 05A17, 05A15, 11P83.
		\end{minipage}
\end{center}

\bigskip

	\section {Introduction}

A partition of a non-negative integer $n$ is represented by a sequence of positive integers arranged in non-increasing order whose sum equals $n$. The number of such partitions is denoted by $p(n)$, and its generating function is given by
	 \begin{align*}
	 	\sum_{n=0}^{\infty}p(n)q^n=\frac{1}{f_1},
	\end{align*}
	 where for $|q|<1$, we adopt the notations $(a;q)_\infty=\prod_{i\geq0}^{}(1-aq^i)$, and $f_\ell:=(q^\ell;q^\ell)_\infty$.
	Approximately 100 years ago,  Ramanujan \cite{r12, r13, r14} discovered amazing congruence properties satisfied by $p(n)$: 
	For all $n \geq 0$,
\begin{align}
	p(5n+4) \equiv& \;0 \pmod{5},\label{R1}\\
	p(7n+5) \equiv&\; 0 \pmod{7},\label{R2}\\
	p(11n+6) \equiv&\; 0 \pmod{11} \label{R3}.
\end{align}
	 Inspired by these results, the subject captured the interest of researchers leading to numerous contributions for various analogous partition functions from an arithmetic perspective. For a general overview of the theory of partitions, we refer the reader to the book of Andrews \cite{r2}.

Very recently, an overreaching area of study in partitions is  that of ``colored partitions" (partitions of positive integer into parts with colors). For instance, let $a(n)$ enumerate the partitions of $n$ wherein even part is monochromatic while the odd parts may appear in one of three colors.  This partition function $a(n)$  has several combinatorial interpretations (see \cite{r1}). For instance, $a(3)=16$, where the partitions in question can be written with colors denoted by subscripts:  
	\begin{align*}
	& 3_1,\; 3_2,\; 3_3,\; 2+1_1,\; 2+1_2,\; 2+1_3, \;1_1+1_1+1_1,\;1_1+1_2+1_2,\;\\ 
      & 1_1+1_2+1_3,\; 1_1+1_3+1_3, \;1_2+1_2+1_2,\; 1_2+1_2+1_3,\;\\ 
     &1_2+1_3+1_3,\; 1_3+1_3+1_3,\; 1_1+1_1+1_2,\; 1_1+1_1+1_3
\end{align*}
	The generating function for $a(n)$ is given by 
	\begin{align*}
		\sum_{n=0}^{\infty} {a(n)q^n}=\frac{f_2^2}{f_1^3}.
	\end{align*}
	Amdeberhan and Merca \cite{r1} succeeded in proving that, for all $n\geq 0$, 
\begin{align*}
		a(7n+2) \equiv 0 \pmod{7}.
\end{align*}
	The partition function $a(n)$ is listed in the Online Encyclopedia of Integer Sequences (OEIS) \cite[A298311]{r15}.  
	With the goal to generalize the idea further, Hirschhorn and Sellers \cite{r4} defined an infinite family of functions $a_k(n)$  that counts the number of partitions of $n$ wherein even parts are monochromatic while the odd parts may appear in one of $k$ colors for fixed $k\geq 1$. The generating function for $a_k(n)$ is given by
	\begin{align}
	\sum_{n=0}^{\infty}a_k(n)q^n=\frac{f_2^{k-1}}{f_1^k}.\label{1.a}
\end{align}\\
	 In \cite{r4}, Hirschhorn and Sellers utilized elementary $q$-series techniques to prove the following infinite family of congruences modulo 7.\\
	 \begin{theorem}[\cite{r4}, Corollary 3.1]\label{t:2.1.1} 
	 $For \; all \; j\geq0 \; and \; all \; n\geq0$,\\
	 \begin{align}
	 a_{7j+1}(7n+5)\equiv 0& \pmod{7}\label{c1}\\
	 a_{7j+3}(7n+2)\equiv 0& \pmod{7}\label{c2}\\
	 a_{7j+4}(7n+4)\equiv 0& \pmod{7}\label{c3}\\
	 a_{7j+5}(7n+6)\equiv 0& \pmod{7}\label{c4}\\
	 a_{7j+7}(7n+3)\equiv 0& \pmod{7}\label{c5}.
	  \end{align}
	  	\end{theorem}
\noindent Further, Thejitha and Fathima \cite{r10} employed the theory of modular forms to obtain the following two sets of congruences satisfied by the function $a_5(n)$.
	\begin{theorem}\label{t:2.1.2}
		For all $n\geq0,$
		\begin{align*}
			a_5(5n+3)\equiv0 \pmod{5}.
		\end{align*}
	\end{theorem}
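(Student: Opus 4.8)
The plan is to prove the congruence $a_5(5n+3)\equiv 0\pmod 5$ using elementary $q$-series dissection, consistent with the paper's stated methodology, rather than the modular-form machinery cited. From the generating function \eqref{1.a} with $k=5$, I would start with
\begin{align*}
\sum_{n=0}^{\infty}a_5(n)q^n=\frac{f_2^{4}}{f_1^{5}}.
\end{align*}
The key arithmetic observation is that modulo $5$ we have $f_1^5\equiv f_5\pmod 5$, by the Frobenius-type congruence $(q;q)_\infty^5\equiv (q^5;q^5)_\infty\pmod 5$. Hence
\begin{align*}
\frac{f_2^{4}}{f_1^{5}}\equiv\frac{f_2^{4}}{f_5}\pmod 5,
\end{align*}
which replaces the troublesome $f_1^{-5}$ by a denominator $f_5$ that is invariant under extraction of residue classes modulo $5$. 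This is the step that makes the whole computation tractable, and it is the natural analogue of the mod-$7$ reductions Hirschhorn and Sellers use in Theorem \ref{t:2.1.1}.

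Next I would perform a $5$-dissection of the numerator $f_2^4$. Writing $f_2^4=\big(\sum b(m)q^{2m}\big)$ in terms of its residue classes of the exponent modulo $5$, the goal is to isolate the coefficients of $q^{5n+3}$ on the left-hand side. Since the factor $1/f_5$ only contributes powers of $q$ divisible by $5$, the residue class modulo $5$ of the exponent is controlled entirely by $f_2^4$. Thus $a_5(5n+3)\pmod 5$ is governed by the coefficients of $q^{5n+3}$ in $f_2^4$, i.e. by the coefficients of $q^{2m}$ with $2m\equiv 3\pmod 5$, which forces $m\equiv 4\pmod 5$. The task reduces to showing that the part of $f_2^4$ supported on exponents $\equiv 3\pmod 5$ vanishes modulo $5$. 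To extract this I would expand $f_2^4=f_2^4$ using a product or series identity for $(q;q)_\infty^4$; one convenient route is Jacobi's identity $(q;q)_\infty^3=\sum_{j\ge 0}(-1)^j(2j+1)q^{j(j+1)/2}$ together with Euler's pentagonal number theorem for the remaining factor $(q;q)_\infty$, after rescaling $q\mapsto q^2$.

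The heart of the argument, and the step I expect to be the main obstacle, is the exponent-counting modulo $5$: after combining the pentagonal exponents $2\cdot\frac{r(3r\pm1)}{2}=r(3r\pm1)$ from $f_2$ with the triangular exponents $2\cdot\frac{j(j+1)}{2}=j(j+1)$ from $f_2^3$, I must show that whenever $r(3r\pm1)+j(j+1)\equiv 3\pmod 5$ the accompanying coefficient $(-1)^{r+j}(2j+1)$ either never occurs or cancels/ vanishes modulo $5$. Concretely, $2j+1\equiv 0\pmod 5$ exactly when $j\equiv 2\pmod 5$, so I would analyze the quadratic residues: the values of $r(3r\pm 1)\bmod 5$ and $j(j+1)\bmod 5$ each range over a restricted set, and a finite case check over $r,j\pmod 5$ should reveal that the only representations hitting the residue $3$ are precisely those with $j\equiv 2\pmod 5$, killing the coefficient. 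Carrying out this finite verification cleanly — organizing the admissible residue pairs so that the vanishing is transparent rather than a brute list — is the delicate part; once it is done, the congruence $a_5(5n+3)\equiv 0\pmod 5$ follows immediately.
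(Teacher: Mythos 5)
Your proposal is correct, but note that the paper itself never proves this statement: Theorem \ref{t:2.1.2} is quoted from Thejitha and Fathima \cite{r10}, who used modular forms, and for an elementary proof the paper points to Sellers \cite{r8} (Theorem \ref{t:2.1.4} with $j=0$). So there is no in-paper proof to compare against; the closest argument the paper actually carries out is the mod-$11$ analogue in the proof of Theorem \ref{t:2.1.9}, where $\frac{f_2^{10}}{f_1^{11}}$ is reduced modulo $11$ to $\frac{f_{22}}{f_{11}}\sum p(n)q^{2n}$ and the classical congruence $p(11n+6)\equiv 0\pmod{11}$ is invoked. Your route differs in one substantive way: after the same Frobenius step you keep $f_2^4$ in the numerator and re-prove the required coefficient vanishing from scratch via Euler and Jacobi. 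Had you instead written $\frac{f_2^4}{f_1^5}=\frac{1}{f_2}\cdot\frac{f_2^5}{f_1^5}\equiv\frac{f_{10}}{f_5}\cdot\frac{1}{f_2}\pmod 5$, extracting $q^{5n+3}$ would force partition arguments $\equiv 4\pmod 5$, and the theorem would follow at once from $p(5m+4)\equiv 0\pmod 5$; that is the paper's (and Sellers') template. Your self-contained version is essentially Ramanujan's original proof of $p(5m+4)\equiv 0\pmod 5$ transplanted into this setting, so the two routes have the same mathematical depth; yours buys independence from the quoted Ramanujan congruence at the cost of carrying out the residue analysis yourself.

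For completeness, the finite check you deferred does close, and with no cancellation needed: modulo $5$ the pentagonal exponents $r(3r\pm 1)$ coming from $f_2$ lie in $\{0,2,4\}$, while for $j\not\equiv 2\pmod 5$ (i.e. $5\nmid 2j+1$) the exponents $j(j+1)$ coming from $f_2^3$ lie in $\{0,2\}$; the possible sums are therefore
\begin{align*}
\{0,2,4\}+\{0,2\}\equiv\{0,1,2,4\}\pmod 5,
\end{align*}
which never contains $3$. Hence every term of $f_2^4$ supported on an exponent $\equiv 3\pmod 5$ carries the factor $2j+1$ with $j\equiv 2\pmod 5$ and so vanishes modulo $5$ outright, exactly as you predicted.
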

	\begin{theorem}\label{t:2.1.3}
		For all $\alpha \geq0$ and all $n\geq0,$
		\begin{align*}
			a_5\left(3^{2\alpha+2}n+\frac{153\cdot 3^{2\alpha}-1}{8}\right)\equiv 0\pmod{3}.
		\end{align*}
	\end{theorem}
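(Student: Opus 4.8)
The plan is to first reduce the generating function \eqref{1.a} modulo $3$. Taking $k=5$ gives $\sum_{n\ge 0} a_5(n)q^n = f_2^4/f_1^5$. Since $(1-q^m)^3 \equiv 1-q^{3m} \pmod 3$, we have $f_\ell^3 \equiv f_{3\ell} \pmod 3$, so writing $f_2^4 = f_2\cdot f_2^3$ and $f_1^5 = f_1^2\cdot f_1^3$ yields
\[
\sum_{n\ge 0} a_5(n)\, q^n \equiv \frac{f_2 f_6}{f_1^2 f_3} \equiv \frac{f_1 f_2 f_6}{f_3^2} \pmod 3,
\]
the second congruence following on multiplying top and bottom by $f_1$ and using $f_1^3\equiv f_3$. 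The point is that $f_6/f_3^2$ is a power series in $q^3$, which is exactly what will make the dissections below clean.

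Next I would observe that the entire infinite family collapses onto its $\alpha=0$ instance. Writing $r_\alpha = (153\cdot 3^{2\alpha}-1)/8$, one checks $8r_\alpha+1 = 153\cdot 9^{\alpha}$, whence $r_{\alpha+1}=9r_\alpha+1$ and, more usefully,
\[
9^{\alpha+2}n + r_{\alpha+1} \;=\; 9^{\alpha+1}\bigl(9n+17\bigr) + r_\alpha .
\]
Thus every term of the $(\alpha+1)$-progression already occurs in the $\alpha$-progression, and a trivial induction reduces the theorem to the base case $a_5(9n+19)\equiv 0 \pmod 3$ for all $n\ge 0$ (here $r_0=19$).

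For the base case I would isolate the series $\sum_{n\ge 0} a_5(9n+1)q^n$ modulo $3$ by applying the $3$-dissection twice, the goal being to prove $\sum_{n\ge 0} a_5(9n+1)\,q^n \equiv 2+2q \pmod 3$, which forces $a_5(9n+1)\equiv 0$ for $n\ge 2$, i.e. $a_5(9n+19)\equiv 0$. Write the $3$-dissection $f_1 f_2 = U_0(q^3) + q\,U_1(q^3) + q^2 U_2(q^3)$. Since $f_6/f_3^2$ is a series in $q^3$, only the middle component feeds the exponents $\equiv 1 \pmod 3$, giving $\sum_n a_5(3n+1)q^n \equiv U_1(q)\,\tfrac{f_2}{f_1^2} \pmod 3$ after the substitution $q^3\mapsto q$. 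Now $\tfrac{f_2}{f_1^2}\equiv \tfrac{f_1 f_2}{f_3}\pmod 3$, so dissecting a second time (extracting exponents $\equiv 0 \pmod 3$, as $a_5(9n+19)$ sits at $q^{3n+6}$ inside $\sum_m a_5(3m+1)q^m$) reduces everything once more to the dissection of $f_1 f_2$, and leaves $\sum_n a_5(9n+1)q^n \equiv \tfrac{W(q)\,U_0(q)}{f_1} \pmod 3$, where $U_1(q)=W(q^3)$. The structural fact driving the collapse is a Lemma I would establish first: modulo $3$, the part of $f_1 f_2$ supported on exponents $\equiv 1 \pmod 3$ is in fact supported on exponents $\equiv 1 \pmod 9$, that is, $[q^m]\,f_1 f_2 \equiv 0 \pmod 3$ whenever $m\equiv 4,7 \pmod 9$. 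This makes $U_1$ a series in $q^3$, which is precisely what lets the second dissection terminate at degree one.

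The main obstacle is exactly this Lemma together with the closing identity. I would prove the Lemma from the classical $3$-dissections of $f_1$ and $f_2$: multiplying them and reducing modulo $3$ both exhibits the vanishing of the $m\equiv 4,7 \pmod 9$ coefficients and writes $U_0,U_1$ as explicit eta-quotients, after which the remaining claim $W(q)U_0(q)\equiv (2+2q)f_1 \pmod 3$ is a finite eta-quotient congruence verifiable directly. Alternatively---closer to the modular-forms method of \cite{r10}---one may recognise $\sum_n a_5(9n+1)q^n$ modulo $3$ as the reduction of a modular form on a suitable $\Gamma_0(N)$ and conclude it equals $2+2q$ by checking coefficients up to the Sturm bound. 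Either way the genuine labour is the bookkeeping of the $f_1 f_2$ dissection modulo $3$; the reduction to the single congruence $a_5(9n+19)\equiv 0$ is what keeps that labour finite.
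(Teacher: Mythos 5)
First, a framing fact: this paper never proves Theorem \ref{t:2.1.3}. It is quoted from Thejitha and Fathima \cite{r10}, where it is obtained via modular forms, so your proposal can only be judged on its own correctness. Unfortunately it cannot be completed, and your own (correct) preliminary steps expose why. The reduction $\sum a_5(n)q^n\equiv f_1f_2f_6/f_3^2\pmod 3$ is right; the nesting identity $9^{\alpha+2}n+r_{\alpha+1}=9^{\alpha+1}(9n+17)+r_\alpha$ is right, so the printed family is indeed equivalent to its $\alpha=0$ case $a_5(9n+19)\equiv0\pmod3$; and your Lemma is true, since one has the exact $3$-dissection
\begin{align*}
f_1f_2\;=\;\frac{f_6f_9^4}{f_3f_{18}^2}\;-\;q\,f_9f_{18}\;-\;2q^2\,\frac{f_3f_{18}^4}{f_6f_9^2},
\end{align*}
whose middle component $-q\,f_9f_{18}$ is supported on exponents $\equiv1\pmod9$. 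The fatal gap is exactly the step you deferred as ``a finite eta-quotient congruence verifiable directly'': the claim $\sum_{n\ge0}a_5(9n+1)q^n\equiv2+2q\pmod3$ is \emph{false}. Pushing your two dissections through with the identity above yields
\begin{align*}
\sum_{n\ge0}a_5(9n+1)\,q^n\;\equiv\;2\,\frac{f_2^2f_3^4}{f_1f_6^2}\;=\;2\,\psi(q)\,\phi(-q^3)^2\pmod3,
\end{align*}
where $\psi(q)=f_2^2/f_1$ and $\phi(-q)=f_1^2/f_2$. The coefficient of $q^4$ on the right is $2\,[q^{1}]\psi(q)\cdot[q^{3}]\phi(-q^3)^2=2\cdot(-4)\equiv1\pmod3$; equivalently $a_5(37)\equiv1\pmod3$, which can be confirmed by direct convolution from $\sum a_5(n)q^n\equiv\frac{f_2}{f_1^2}\cdot\frac{f_6}{f_3}\pmod 3$. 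Since $37=9\cdot2+19$, the $\alpha=0$ case is false, and by your own collapse argument the entire printed theorem is false. No bookkeeping can close this gap.

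What is actually true, and where the statement goes wrong: since $\phi(-q^3)^2$ is a series in $q^3$ and triangular numbers are never $\equiv2\pmod3$, the series $\psi(q)\phi(-q^3)^2$ has no terms at all with exponent $\equiv2\pmod3$; this proves $a_5(27n+19)\equiv0\pmod3$. The modulus in the theorem should be $3^{2\alpha+3}$, not $3^{2\alpha+2}$ (one more iteration, using that a sum of two squares is never $\equiv3,6\pmod9$, gives the corrected $\alpha=1$ case $a_5(243n+172)\equiv0\pmod3$). Note that the misprint survives casual testing because $a_5(19)\equiv a_5(28)\equiv0\pmod3$; the first counterexample is $a_5(37)$. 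Finally, your collapse observation should itself have been a red flag: with the corrected modulus, $r_{\alpha+1}-r_\alpha=153\cdot9^{\alpha}$ is not divisible by $3^{2\alpha+3}=27\cdot9^{\alpha}$, so the true progressions do \emph{not} nest, and the genuine theorem really does require an induction on $\alpha$ (or the modular-forms argument of \cite{r10}) — precisely the labour your plan was built to avoid. An ``infinite family'' that collapses onto its base case is usually a sign the family has been transcribed incorrectly, and that is what happened here.
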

	In a manner similar to that which was highlighted in \cite{r4}, Sellers \cite{r8} obtained the following theorem.  
	\begin{theorem}[\cite{r8}, Corollary 3.1]\label{t:2.1.4}
	For all $j \geq0$ and all $n\geq0,$
	\begin{align*}
		a_{5j+5}(5n+3)\equiv0\pmod{5}.
	\end{align*}
	\end{theorem}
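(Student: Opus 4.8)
The plan is to specialize the generating function \eqref{1.a} to $k=5j+5$ and to reduce everything modulo $5$ by means of the elementary congruence $f_t^5\equiv f_{5t}\pmod 5$, which follows at once from the binomial theorem (the ``freshman's dream'' $(1-x)^5\equiv 1-x^5\pmod 5$). First I would write
\[
\sum_{n=0}^{\infty} a_{5j+5}(n)q^n=\frac{f_2^{5j+4}}{f_1^{5j+5}}=\frac{(f_2^5)^{j}\,f_2^4}{(f_1^5)^{j+1}}\equiv\frac{f_{10}^{\,j}}{f_5^{\,j+1}}\,f_2^4\pmod 5.
\]

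The crucial structural observation is that the prefactor $f_{10}^{\,j}/f_5^{\,j+1}$ is a power series in $q^5$ alone. Consequently, when I extract the coefficient of $q^{5n+3}$ from the product, only the coefficients of $f_2^4$ at exponents $\equiv 3\pmod 5$ can contribute; convolving with a series in $q^5$ preserves the residue class $3$ modulo $5$. Thus it suffices to prove the single claim that the coefficient of $q^{5m+3}$ in $f_2^4$ is $\equiv 0\pmod 5$ for every $m\geq 0$, uniformly in $j$.

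To establish this claim I would again reduce modulo $5$, writing $f_2^4=f_2^5/f_2\equiv f_{10}/f_2\pmod 5$, and then expand $1/f_2=\sum_{r\geq 0}p(r)q^{2r}$. Since $f_{10}$ contributes only exponents divisible by $5$, the coefficient of $q^{5m+3}$ in $f_{10}/f_2$ is an integer combination of values $p(r)$ for which $2r\equiv 3\pmod 5$, that is, $r\equiv 4\pmod 5$. Here the main input is Ramanujan's classical congruence $p(5r+4)\equiv 0\pmod 5$: each such $p(r)$ vanishes modulo $5$, hence so does the whole coefficient. Combining the two reductions yields $a_{5j+5}(5n+3)\equiv 0\pmod 5$ for all $j,n\geq 0$.

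I expect the only genuinely delicate point to be the bookkeeping of the second paragraph, namely checking carefully that multiplying $f_2^4$ by a series in $q^5$ cannot move mass out of the residue class $3\pmod 5$, rather than any hard estimate; everything else is a routine application of $f_t^5\equiv f_{5t}$ together with Ramanujan's congruence for $p(n)$, both standard and elementary. An alternative to invoking $p(5n+4)\equiv 0\pmod 5$ directly would be to substitute the explicit $5$-dissection of $1/f_1$ (equivalently, of $f_1^4$), but appealing to Ramanujan's congruence keeps the argument shortest.
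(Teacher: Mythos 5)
Your proof is correct: the reduction $f_t^5\equiv f_{5t}\pmod 5$, the observation that the prefactor $f_{10}^{\,j}/f_5^{\,j+1}$ is a series in $q^5$ (so coefficient extraction in the class $3\pmod 5$ passes to $f_2^4$), and the final appeal to Ramanujan's congruence $p(5r+4)\equiv 0\pmod 5$ via $2r\equiv 3\pmod 5\Rightarrow r\equiv 4\pmod 5$ all go through. The paper itself does not prove this statement—it quotes it from Sellers \cite{r8}—but your argument is essentially the same elementary method the paper uses for its mod-$11$ analogue, Theorem \ref{t:2.1.9}, where $f_2^{10}/f_1^{11}\equiv (f_{22}/f_{11})\sum_{n\geq 0} p(n)q^{2n}\pmod{11}$ and $p(11n+6)\equiv 0\pmod{11}$ close the argument.
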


	Using theta functions and elementary generating function manipulations, our overarching goal in this article is to further study $a_k(n)$ from an arithmetic perspective. With this in mind, we prove several new congruences modulo powers of $2$. The following are our main results.
	\begin{theorem}\label{t:2.1.5}
		For all $r \geq 1$ and all $n \geq 0$, 
		\begin{align*}
			a_{2^r}(2n+1) \equiv 0 \pmod {2^r}.
		\end{align*}
	\end{theorem}
	
	\begin{theorem}\label{t:2.1.6}
		For all $r \geq 1$ and all $n \geq 0$,  
		\begin{align*}
			a_{2r}(2n+1) \equiv 0 \pmod {2}.
		\end{align*}
	\end{theorem}


     \begin{theorem}\label{t:2.1.7}
	Let $p$ be prime, $p > 3$, and $r$, $0 < r < p$, such that $24r+1$ is a quadratic nonresidue
	modulo $p$. Then, for all $n \geq 0$, 
	\begin{align*}
		a_3(pn+r) \equiv 0 \pmod{2}.
	\end{align*}
	\end{theorem}

	\begin{theorem}\label{t:2.1.8}
	Let $p$ be prime, $p > 2$, and $r$, $0 < r < p$, such that $8r+1$ is a quadratic nonresidue
			modulo $p$. Then, for all $n \geq 0$, 
			\begin{align*}
				a_5(pn+r) \equiv 0 \pmod{2}.
			\end{align*}
					\end{theorem}
\begin{theorem}\label{t:2.1.10}
			For all $n \geq 0$,
		\begin{align*}
		a_{11}(9n+6) &\equiv 0 \pmod {2}.
		\end{align*}
\end{theorem}

	 Section 2 is devoted to providing the tools necessary for the proofs of our main results, and these proofs are then provided in Section 3. In view of Theorem \ref{t:2.1.1} we conclude this paper  by proving a set of congruences modulo 11.

\section{Preliminaries}
We require the following results and lemmas related to Ramanujan's theta functions. 
Ramanujan's general theta function is 
\begin{align*}
	f(a,b):= \sum_{n=-\infty}^{\infty}a^{\frac{n(n+1)}{2}}b^{\frac{n(n-1)}{2}},\;\;|ab|<1.
\end{align*}
In Ramanujan's notation,  Jacobi's triple product identity (\cite{r3}, Entry 19) is given by
\begin{align*}
	f(a,b)=(-a;ab)_\infty(-b;ab)_\infty(ab;ab)_\infty.
\end{align*}
In what follows we give special cases of Jacobi's triple product identity.

\bigskip 
\begin{lemma}[Euler's Pentagonal Number Theorem (\cite{r3}, Entry 22)] 
\label{lem:PNT}
We have
	\begin{align*}
		f_1=\sum_{k=-\infty}^{\infty}(-1)^kq^\frac{k(3k-1)}{2}.
	\end{align*}
\end{lemma}
\begin{lemma}[Jacobi's Theorem (\cite{r5}, (1.7.1))]
\label{2.8}
We have
\begin{align*}
	f_1^{3} 
	\notag=& \sum_{k=0}^{\infty} (-1)^{k} (2k + 1) q^{\frac{k(k+1)}{2}}.
	\end{align*}
\end{lemma}
We next highlight the following 3-dissections which allow us to write the necessary generating functions in a suitable manner in order to prove our main results.

\begin{lemma}[Hirschhorn \cite{r5}, Section 14.3]\label{l3}
	We have
	\begin{align*}
		\frac{f_{2}^{2}}{f_{1}}	= \frac{f_{6} f_{9}^{2}}{f_{3} f_{18}}+q\frac{f_{18}^{2}}{f_{9}}.
	\end{align*}
	\end{lemma}
		\begin{lemma}[Hirschhorn \cite{r5}, Section 14.8]\label{l4}
			We have
			\begin{align*}
		f_1^3 =\frac{f_6f_9^6}{f_3f_{18}^3}+qf_9^3.
		\end{align*}
\end{lemma}

\noindent 
We close this section with a pivotal congruence result which can be established using the binomial theorem.
\begin{lemma}\label{bt}
For any prime $p$ and positive integers $k$ and $m$,
	\begin{align*}
	f_m^{p^k} \equiv f_{mp}^{p^{k-1}} \pmod{p^k}.
\end{align*}
\end{lemma}

	\section{Proof of Theorems }
	
\noindent 
We are now prepared to prove our main results.
	\begin{proof}[\it\textbf{Proof of Theorem \ref{t:2.1.5}}]
	Thanks to (\ref{1.a}), we can rewrite the generating function for $a_k(n)$ as 
	\begin{align*}
		\sum_{n=0}^{\infty}a_{2^r}(n)q^n=&\frac{f_2^{{2^r}-1}}{f_1^{2^r}}.
	\end{align*}
	Lemma \ref{bt} allows us to see immediately that
	\begin{align}
	\sum_{n=0}^{\infty}a_{2^r}(n)q^n\equiv& f_2^{{2^{r-1}}-1}\pmod{2^r}.\label{3.a}
	\end{align}
Note that $f_2^{{2^{r-1}}-1}$ is a function of $q^2$, so our theorem follows immediately.
	\end{proof}
	\begin{proof}[\it\textbf{Proof of Theorem \ref{t:2.1.6}}]Using (\ref{1.a}) and Lemma \ref{bt} we have
	\begin{align*}
		\sum_{n=0}^{\infty}a_{2r}(n)q^n
		\equiv& \frac{f_2^{2r-1}}{f_2^{r}}\pmod{2} \\
           =& f_2^{r-1}.  
	\end{align*}
Note that $f_2^{r-1}$ is a function of $q^2$, so our theorem follows immediately.
	\end{proof}
	\begin{proof}[\it\textbf{Proof of Theorem \ref{t:2.1.7}}]Using (\ref{1.a})
		 we have
		\begin{align*}
			\sum_{n=0}^{\infty}a_3(n)q^n=&\frac{f_2^2}{f_1^3}\\
			\equiv& \frac{f_1^4}{f_1^3}\pmod{2}\\
			=&f_1\\
			\equiv&\sum_{k =-\infty}^\infty q^{\frac{k(3k+1)}{2}} \pmod{2}
		\end{align*}
		thanks to Lemma \ref{lem:PNT}.
		Now, we are interested in comparing the exponents of $q^{pn+r}$ and $q^{k(3k+1)/2}$. That
		is, we want to know if it is ever possible to have
		\begin{align*}
			pn+r = \frac{k(3k+1)}{2}
		\end{align*}
		for specific $n$ and $k$. Note that, if this is possible, then we must have
		\begin{align*}
			r \equiv \frac{k(3k+1)}{2} \pmod{p}.
		\end{align*}
		Completing the square, this means we would have
		\begin{align*}
			24r + 1 \equiv (6k + 1)^2 \pmod{p}.
		\end{align*}
		However, this is never possible because we have assumed that $24r+1$ is a quadratic
		nonresidue modulo $p$. This completes the proof of Theorem \ref{t:2.1.7}.			
	\end{proof}
	\begin{proof}[\it\textbf{Proof of Theorem \ref{t:2.1.8}}]
We have
\begin{align*}
	\sum_{n=0}^{\infty} a_5(n) q^n 
	=& \frac{f_2^4}{f_1^5} \\
	\equiv& \frac{f_1^8}{f_1^5}\pmod{2} \\
	=& f_1^3 \\
	\equiv& \sum_{k=0}^{\infty} q^{\frac{k(k+1)}{2}} \pmod{2}
\end{align*}
thanks to Lemma \ref{2.8}.
Now, we are interested in comparing the exponents of $q^{pn+r}$ and $q^{\frac{k(k+1)}{2}}$. That
is, we want to know if it is ever possible to have
\begin{align*}
	pn+r = \frac{k(k+1)}{2}
\end{align*}
for specific $n$ and $k$. Note that, if this is possible, then we must have
\begin{align*}
	r \equiv \frac{k(k+1)}{2} \pmod{p}.
\end{align*}
Completing the square, this means we would have
\begin{align*}
	8r + 1 \equiv (2k + 1)^2 \pmod{p}.
\end{align*}
However, this is never possible because we have assumed that $8r+1$ is a quadr]atic
nonresidue modulo $p$. This completes the proof.
\end{proof}

\begin{proof}[\it\textbf{Proof of Theorem \ref{t:2.1.10}}]
We note that 
\begin{align*}
\sum_{n=0}^{\infty}a_{11}(n)q^n
&=
\frac{f_2^{10}}{f_1^{11}} \\
&=
\frac{f_2^{10}}{f_1^{14}}\cdot f_1^3 \\
&\equiv 
\frac{f_2^{10}}{f_2^{7}}\cdot f_1^3 \pmod{2}\\
&= 
f_1^3f_2^3  \\
&= 
\left( \frac{f_6f_9^6}{f_3f_{18}^3}+qf_9^3 \right)\left( \frac{f_{12}f_{18}^6}{f_6f_{36}^3}+q^2f_{18}^3 \right) 
\end{align*}
thanks to Lemma \ref{l4}.  
Thus, 
$$
\sum_{n=0}^{\infty}a_{11}(3n)q^{3n} \equiv \frac{f_6f_9^6}{f_3f_{18}^3}\cdot \frac{f_{12}f_{18}^6}{f_6f_{36}^3} +q^3f_9^3f_{18}^3  \pmod{2} 
$$
or 
\begin{align*}
\sum_{n=0}^{\infty}a_{11}(3n)q^{n} 
&\equiv 
\frac{f_3^6f_{4}f_{6}^3}{f_1f_{12}^3} +qf_3^3f_{6}^3  \pmod{2}  \\
&\equiv
\frac{f_3^6f_{2}^2f_{6}^3}{f_1f_{12}^3} +qf_3^3f_{6}^3 \pmod{2} \\
&= 
\frac{f_3^6f_{6}^3}{f_{12}^3}\cdot \frac{f_{2}^2}{f_1} +qf_3^3f_{6}^3 \\
&= 
\frac{f_3^6f_{6}^3}{f_{12}^3} \left( \frac{f_{6} f_{9}^{2}}{f_{3} f_{18}}+q\frac{f_{18}^{2}}{f_9} \right) +qf_3^3f_{6}^3
\end{align*}
thanks to Lemma \ref{l3}.  
Note that no terms of the form $q^{3n+2}$ will arise from the power series representation of the final expression above.  Therefore, for all $n\geq 0$, 
$$
a_{11}(3(3n+2)) = a_{11}(9n+6) \equiv 0 \pmod{2}.
$$
\end{proof}

\section{Congruences Analogous to Theorem \ref{t:2.1.1} }
In this section we prove a set of congruences modulo 11 which are similar to the congruences found in Theorem \ref{t:2.1.1}.
\bigskip

\noindent It is worthwhile to share the following results of Ahlgren \cite{r11} and Cooper, Hirschhorn, and Lewis \cite{rc1}.
\begin{theorem}[\cite{r11},\cite{rc1}]\label{imp}
	Suppose that $r$ and $s$ are integers and that $p$ is prime.  
	Let $f_1^r f_2^s =\sum_{n=0}^{\infty} c(n) q^n$. Then the coefficients $c(n)$ satisfy
	\begin{align*}
		c\left( pn + \frac{(r + 2s)(p^2 - 1)}{24}\right)=\epsilon
		p^{\frac{r+s}{2}-1} c\left(\frac{n}{p}\right),
	\end{align*}
	for the following values of $r$, $s$, $p$, and $\epsilon$.
	\begin{table}[h]
		\centering
		\begin{tabular}{|c|c|c|}
			\hline
			$(r,s)$ & $p$ & $\epsilon$  \\ 
			\hline
			$(7, 3), (3, 7)$ & $p\equiv7, 11\pmod{12} $& 
			$\begin{cases}
				1, & \text{if } p\equiv7\pmod{8}, \\
				-1, & \text{if } p\equiv3\pmod{8}
			\end{cases}$\\
			$(-6, 16)$ & $p\equiv 11\pmod{12}$ & 1\\ 
			$(11, -5)$ & $p\equiv7, 11\pmod{12} $& 
			$\begin{cases}
				1, & \text{if } p\equiv7\pmod{8}, \\
				-1, & \text{if } p\equiv3\pmod{8}
			\end{cases}$\\
			$(3,3)$ & $p\equiv 5,7\pmod{8}$ & 1\\ 
			$(2, 4)$ & $p\equiv7, 11\pmod{12} $&1\\
			\hline
		\end{tabular}
	\end{table}
	
\end{theorem}
\noindent With the help of the above theorem we prove congruences for $a_{11}(n)$  modulo 11.
\begin{theorem}\label{t:2.1.9}
	For all $n \geq 0$,  
	\begin{align}
		a_{11j+1}(11n+6)\equiv 0& \pmod{11}\label{1.5},\\
		a_{11j+4}(11n+10)\equiv 0& \pmod{11}\label{1.6},\\
		a_{11j+6}(11n+9)\equiv 0& \pmod{11}\label{1.7},\\
		a_{11j+8}(11n+8)\equiv 0& \pmod{11}\label{1.8},\;\; and\\
		a_{11j+11}(11n+1)\equiv 0& \pmod{11}\label{1.9}.
	\end{align}
\end{theorem}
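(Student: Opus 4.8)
The plan is to mirror the elementary strategy behind Theorems \ref{t:2.1.1} and \ref{t:2.1.4}: collapse the whole infinite family onto a handful of base cases, and then attack each base case by an explicit $11$-dissection. First I would exploit the Frobenius-type congruence $f_\ell^{11}\equiv f_{11\ell}\pmod{11}$ (which holds because $(1-x)^{11}\equiv 1-x^{11}$). Writing $k=11j+i$ in \eqref{1.a} gives
\[
\sum_{n\ge 0}a_{11j+i}(n)q^n=\left(\frac{f_2}{f_1}\right)^{11j}\frac{f_2^{i-1}}{f_1^i}\equiv\left(\frac{f_{22}}{f_{11}}\right)^{j}\frac{f_2^{i-1}}{f_1^i}\pmod{11}.
\]
Since $(f_{22}/f_{11})^{j}$ is a power series in $q^{11}$, multiplication by it cannot move a coefficient between residue classes modulo $11$; hence for a fixed residue $s$ the congruence $a_{11j+i}(11n+s)\equiv 0$ holds for all $j\ge 0$ as soon as it holds for $j=0$. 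Thus it suffices to prove the five base cases $a_i(11n+s_i)\equiv 0\pmod{11}$ for $(i,s_i)\in\{(1,6),(4,10),(6,9),(8,8),(11,1)\}$, which are exactly \eqref{1.5}--\eqref{1.9}. As a sanity check these are precisely the pairs with $24\,s_i\equiv 2-i$, i.e. $2s_i+i\equiv 2\pmod{11}$, the usual ``$24$-shift'' governing Ramanujan-type congruences.

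Next I would dispose of $i=1$ and $i=11$. For $i=1$ the generating function is $1/f_1$, so $a_1=p$ and the claim is exactly Ramanujan's classical congruence $p(11n+6)\equiv 0\pmod{11}$. For $i=11$, using $f_1^{11}\equiv f_{11}$ one has $f_2^{10}/f_1^{11}\equiv f_2^{10}/f_{11}\pmod{11}$; since $1/f_{11}$ is a series in $q^{11}$ the claim reduces to $[q^{11n+1}]f_2^{10}\equiv 0$. As $f_2^{10}$ carries only even exponents this vanishes trivially for even $n$, while for $n=2m+1$ it equals $[q^{11m+6}]f_1^{10}$, and $f_1^{10}\equiv f_{11}/f_1$ forces $[q^{11m+6}]f_1^{10}$ to be an $11$-fold combination of values $p(11t+6)$. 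Hence $i=11$ collapses onto Ramanujan's congruence as well.

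The substantive work is the three mixed cases $i\in\{4,6,8\}$. Here I would again replace $1/f_1^{11}$ by $1/f_{11}$ to reduce each statement to the vanishing modulo $11$ of a single eta-product coefficient, namely
\[
[q^{11n+10}]\,f_1^{7}f_2^{3},\qquad [q^{11n+9}]\,f_1^{5}f_2^{5},\qquad [q^{11n+8}]\,f_1^{3}f_2^{7}.
\]
The plan is to produce the full $11$-dissection of each weight-five product from classical theta identities: the Jacobi triple product together with the factorisations $f_1f_2=\phi(-q)\psi(q)$, $f_1^2=\phi(-q)f_2$, and $f_2^2=\psi(q)f_1$ (so that, for instance, $f_1^5f_2^5=\phi(-q)^5\psi(q)^5$), followed by the known $11$-dissections of $\phi$ and $\psi$, whose exponents $n^2$ and $n(n+1)/2$ occupy only the residues $\{0,1,3,4,5,9\}$ and $\{0,1,3,4,6,10\}$ modulo $11$. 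Extracting the $q^{s_i}$-component of each dissected product and checking that its coefficient is divisible by $11$ would finish the proof.

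I expect the main obstacle to be exactly this last step. The fifth powers, and the leftover single factors of $f_1$ or $f_2$ that survive the factorisation when $i=4,8$, smear the clean residue gaps of $\phi$ and $\psi$, so the vanishing is not visible term by term and must instead fall out of the interaction of several dissected series. Arranging these $11$-dissections so that the unwanted residue class cancels modulo $11$---without resorting to modular-form machinery---is the crux, and is what keeps the argument, in the spirit of \cite{r4,r8}, ``truly elementary.''
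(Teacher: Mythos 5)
Your proposal follows essentially the same route as the paper's own proof: the same Frobenius reduction $f_\ell^{11}\equiv f_{11\ell}\pmod{11}$, together with the observation that $(f_{22}/f_{11})^{j}$ is a power series in $q^{11}$, collapses the family to the $j=0$ base cases, and the cases $k=1$ and $k=11$ are then settled, exactly as in the paper, by Ramanujan's congruence $p(11n+6)\equiv 0\pmod{11}$ (the paper writes $f_2^{10}/f_1^{11}\equiv (f_{22}/f_{11})\sum p(n)q^{2n}$ and notes that $2n\equiv 1\pmod{11}$ forces $n\equiv 6\pmod{11}$, which is your $k=11$ argument in different clothing). The three mixed cases $i\in\{4,6,8\}$, which you honestly flag as the unfinished crux, are precisely the cases the paper does not prove either --- it asserts only that they ``can be obtained similarly but involve much tedious calculations which we omit'' (presumably via its Lemma \ref{2.8}, the mod-$11$ form of Jacobi's theorem, rather than your proposed $\phi$/$\psi$ factorisations) --- so on those cases your proposal is exactly as complete, and as incomplete, as the published argument.
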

\begin{proof}[\it\textbf{Proof}]When $j=0$, Ramanujan's congruence (\ref{R3}) implies congruence (\ref{1.5}).  Next, we consider (\ref{1.9}). 
	
	Thanks to (\ref{1.a}), we have
	\begin{align*}
		\sum_{n=0}^{\infty}a_{11}(n)q^n=&\frac{f_2^{10}}{f_1^{11}}\\
		\equiv &\frac{f_{22}}{f_{11}}\sum_{n=0}^{\infty}p(n)q^{2n}\pmod{11}.
	\end{align*}
	To conclude the proof of (\ref{1.9}) we note that $2n\equiv1\pmod{11}$ implies $n\equiv6\pmod{11}$. Again thanks to (\ref{R3}), we complete the proof of congruence (\ref{1.9}) of Theorem \ref{t:2.1.9}, when $j=0$.\\
	To prove congruence (\ref{1.6}), we consider
	\begin{align*}
		\sum_{n=0}^{\infty}a_{4}(n)q^n=&\frac{f_2^{3}}{f_1^{4}}\\
		=&\frac{f_2^{3}}{f_1^{4}}.\frac{f_1^7}{f_1^7}\\
		=&\frac{f_1^7f_2^3}{f_1^{11}}\\
		\equiv&\frac{f_1^7f_2^3}{f_{11}}\pmod{11}.
	\end{align*}
	Define
	\begin{align*}
		\sum_{n=0}^{\infty}A(n)q^n := {f_1^7}{f_2^3}.
	\end{align*}
	Since $p=11$ satisfies the condition $p\equiv7, 11\pmod{12}$ and further $p\equiv3\pmod{8}$, we set $r=7$ and $s=3$ in Theorem \ref{imp} to obtain 
	$$A\left( 11n + \frac{13\cdot 120}{24}\right)=-{11}^4 A\left(\frac{n}{11}\right).$$
	This implies
	\begin{align*}
		A\left( 11n + 65\right)\equiv&\; 0\pmod{11}
	\end{align*}
	for all $n\geq 0$.  
	It is easy to verify  using Mathematica that $A(m)\equiv 0 \pmod{11}$ also holds for $m\in \{10,\;21,\;32,\; 43,\;54\}$. As a result, we have, for all $n\geq 0$, 
	\begin{align*}
		A\left( 11n + 10\right) \equiv a_4(11n+10)\equiv 0 \pmod{11},
	\end{align*}
	which completes the proof of congruence (\ref{1.6}), when $j=0$.\\
	
	Similarly to prove congruence (\ref{1.7}), we consider
	\begin{align*}
		\sum_{n=0}^{\infty}a_{6}(n)q^n=&\frac{f_2^{5}}{f_1^{6}}\\
		=&\frac{f_2^{5}}{f_1^{6}}\cdot \frac{f_2^{11}}{f_2^{11}}\\
		\equiv&\frac{f_2^{16}}{f_1^{6}f_{22}}\pmod{11}.
	\end{align*}
	Define 
	\begin{align*}
		\sum_{n=0}^{\infty}B(n)q^n := \frac{f_2^{16}}{f_1^6}.
	\end{align*}
	Since $p=11$ satisfies the condition $p\equiv11\pmod{12}$, we set $r=-6$ and $s=16$ in Theorem \ref{imp} to obtain
		$$B\left( 11n + \frac{26\cdot 120}{24}\right)=
		{11}^4 B\left(\frac{n}{11}\right).$$
	This implies
	\begin{align*}
		B\left( 11n + 130\right)\equiv&\; 0\pmod{11}
	\end{align*}
	for all $n\geq 0$.  
	It is easy to verify  using Mathematica that $B(m)\equiv 0 \pmod{11}$ for $m\in \{9,\;20,\; 31,\; 42, \;53,\; 64, \;75,\;86,\;97,\;108,\;119\}$. Hence, for all $n\geq 0$, 
	\begin{align*}
		B\left( 11n + 9\right) \equiv a_6(11n+9)\equiv 0 \pmod{11},
	\end{align*}
	which completes the proof of congruence (\ref{1.7}), when $j=0$.\\
	To prove congruence (\ref{1.8}), we consider
	\begin{align*}
		\sum_{n=0}^{\infty}a_{8}(n)q^n=&\frac{f_2^{7}}{f_1^{8}}\\
		=&\frac{f_2^{7}}{f_1^{8}}.\frac{f_1^3}{f_1^3}\\
		=&\frac{f_1^3f_2^7}{f_1^{11}}\\
		\equiv&\frac{f_1^3f_2^7}{f_{11}}\pmod{11}.
	\end{align*}
	Define
	\begin{align*}
		\sum_{n=0}^{\infty}C(n)q^n := {f_1^3}{f_2^7}.
	\end{align*}
	Since $p=11$ satisfies the condition $p\equiv7, 11\pmod{12}$ and further $p\equiv3\pmod{8}$, we set $r=3$ and $s=7$ in Theorem \ref{imp} to obtain 
	$$C\left( 11n + \frac{17\cdot 120}{24}\right)=-{11}^4 C\left(\frac{n}{11}\right).$$
	This implies
	\begin{align*}
		C\left( 11n + 85\right)\equiv&\; 0\pmod{11}
	\end{align*}
	for all $n\geq 0$.  
	It is easy to verify  using Mathematica that $C(m)\equiv 0 \pmod{11}$ for $m\in \{8,\;19,\;30,\;41,\;52,\;63,\;74\}$.  
	Thus, we conclude that, for all $n\geq 0$, 
	\begin{align*}
		C\left( 11n + 8\right) \equiv a_8(11n+8)\equiv 0 \pmod{11}.
	\end{align*}
	This completes the proof of congruence (\ref{1.6}), when $j=0$.
	
	Now for all $j\geq0$ and $k \geq0$,
	\begin{align*}
		\sum_{n=0}^{\infty} a_{11j+k}(n) q^n
		= &\frac{f_2^{11j+k-1}}{f_1^{11j+k}}\\
		\equiv&\frac{f_{22}^j}{f_{11}^j} \cdot \frac{f_2^{k-1}}{f_1^k}
		\pmod{11}\\
		= &\frac{f_{22}^j}{f_{11}^j} \sum_{n=0}^{\infty} a_k(n) q^n .
	\end{align*}
	Thanks to the generating function of $a_k(n)$ and 
	the above proofs when $j=0$, the result then follows.
\end{proof}
\section{Concluding Remarks}
Much the same argument as the one given in the proof of Theorem \ref{t:2.1.9} may be used to give an alternate proof of Hirschhorn and Sellers' congruences \eqref{c2}-\eqref{c4}.
For example, to obtain (\ref{c2}), we consider 
\begin{align*}
	\sum_{n=0}^{\infty}a_{3}(n)q^n=\frac{f_2^{2}}{f_1^{3}}
	=\frac{f_2^{7}}{f_1^{14}}.\frac{f_1^{11}}{f_2^5}
	\equiv\frac{f_{14}}{f_{7}^2}.\frac{f_1^{11}}{f_2^5}\pmod{7}.
\end{align*}
With $(r,s)=(11,-5)$ in Theorem \ref{imp}, and arguing as in Section 4, we complete the proof of (\ref{c2}).
\footnotesize

		\bigskip
		
		\bigskip
		
		\noindent\textsuperscript{1,3}Ramanujan School of Mathematical Sciences,\\Department of Mathematics,\\ Pondicherry University,\\ Puducherry - 605014, India. \\

		\bigskip
		\noindent\textsuperscript{2}Department of Mathematics and Statistics, \\University of Minnesota Duluth,\\Duluth, MN 55812, USA.\bigskip\\
		
		\noindent Email: \texttt{anjelinvallialil@pondiuni.ac.in} \\
		Email: \texttt{jsellers@d.umn.edu} (\Letter)\\
		Email: \texttt{dr.fathima.sn@pondiuni.ac.in} 
\end{document}